\newtheorem{theorem}{Theorem}%[section]
\def\qed{\ifhmode\unskip\nobreak\fi\quad\ifmmode\Box\else$\Box$\fi}
\def\qed{\hfill $\Box$}
\begin{document}
\title{Order plus size of $\tau$-critical graphs}
\author{
 {\sl Andr\'as Gy\'arf\'as}\\
 Alfr\'ed R\'enyi Mathematical\\ Institute,
Budapest, Hungary
 \and
{\sl Jen\H{o} Lehel} \\ University of Louisville\\ Louisville, KY 40292}
\maketitle

\begin{abstract} Let $G=(V,E)$ be a $\tau$-critical graph with $\tau(G)=t$. Erd\H{o}s and Gallai %\cite{EG}
 proved that $|V|\leq 2t$ and the bound $|E|\leq {t+1\choose 2}$ was obtained by
Erd\H{o}s, Hajnal and Moon.
%\cite{EHM}.
We give here the sharp combined bound $|E|+|V|\leq {t+2\choose 2}$ and find all graphs with equality.
\end{abstract}

A set of vertices meeting every edge of a graph $G$ is called a {\it transversal set} of $G$. The {\it transversal number} of $G$, $\tau(G)$,  is defined to be the
the minimum cardinality of a transversal set of $G$.
A simple graph $G=(V,E)$ with no isolated vertex is called {\it $\tau$-critical} if $\tau(G-e)=\tau(G)-1$, for every $e\in E$ (where $G-e=(V, E\setminus\{e\})$). The primary sources for the properties of $\tau$-critical graphs are
Lov\'asz and Plummer \cite[Chapter 12.1]{LP}, and
Lov\'asz \cite[Chapter 8, Exercises 10--25]{LO}.

The tight bounds for the number of edges and the number of vertices in a $\tau$-critical graph $G=(V,E)$ with $\tau(G)=t$
are:
\begin{equation}
\label{bounds} |V|\leq 2t
\quad \hbox{ and } \quad |E|\leq {t+1\choose 2}.
\end{equation}

The vertex bound is due to Erd\H{o}s and Gallai \cite{EG}, and the edge bound was obtained by Erd\H{o}s, Hajnal, and Moon \cite{EHM}. Here we derive the combined bound
$|E|+|V|\leq{t+2\choose 2}$ and determine all extremal graphs (Theorem \ref{main}). Note that the combined bound immediately gives the edge bound in  (\ref{bounds}) since $|V|\ge t+1$. The proof of the combined bound comes easily from the next degree bound.
% of  Hajnal \cite{H}.

\medskip

\noindent{\bf Theorem A. [Hajnal \cite{H}]}
\label{maxdegree}
{\it Let $G=(V,E)$ be a $\tau$-critical graph
of order $n$
with $\tau(G)=t$. Then $deg_G(x)\leq 2t-n+1$ for every $x\in V$. }\qed \\

\begin{theorem}
\label{main}
 If
 $G=(V,E)$ is a $\tau$-critical graph of order $n$ with $\tau(G)=t$, then
 \begin{equation}
 \label{vemax}
 |V|+|E|\leq {t+2\choose 2}.
  \end{equation}
 Furthermore, the bound is tight if and only if $G\cong K_{n}$, $n\geq 2$, or $G\cong 2K_2$ or $G\cong C_5$.
\end{theorem}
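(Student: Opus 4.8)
The plan is to derive the inequality directly from Theorem A and then analyse equality through the complement graph. First I would sum Hajnal's degree bound over all vertices to obtain $2|E|=\sum_{x\in V}\deg_G(x)\le n(2t-n+1)$, hence
\[
|V|+|E|\le n+\frac{n(2t-n+1)}{2}=\frac{n(2t+3-n)}{2}.
\]
The right-hand side is a concave quadratic in $n$ with maximum at $n=t+\tfrac32$; since $n\ge t+1$ is an integer (note $\alpha(G)=n-t\ge 1$ by Gallai's identity $\tau(G)+\alpha(G)=n$), its largest value over the admissible range is $\binom{t+2}{2}$, attained exactly at $n=t+1$ and $n=t+2$. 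This establishes (\ref{vemax}).

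For the characterization I would observe that $|V|+|E|=\binom{t+2}{2}$ forces two conditions at once: $n\in\{t+1,t+2\}$ (otherwise the quadratic is strictly smaller), and equality throughout the degree sum, i.e.\ $G$ is $(2t-n+1)$-regular. If $n=t+1$ then $G$ is $t$-regular on $t+1$ vertices, which is possible only for $G\cong K_{t+1}$, i.e.\ $K_n$ with $n\ge 2$; conversely one checks at once that $K_{t+1}$ is $\tau$-critical and attains equality.

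The substantive case is $n=t+2$, where $G$ is $(t-1)$-regular; here I plan to pass to the complement $\bar G$, which is then $2$-regular, hence a disjoint union of cycles. Gallai's identity gives $\alpha(G)=2$, so $\bar G$ is triangle-free and its cycles have length $\ge 4$. The $\tau$-critical hypothesis translates cleanly: deleting any edge $uv$ raises $\alpha$ from $2$ to $3$, and an independent triple in $G-uv$ must contain both $u$ and $v$ (else it would already be independent in $G$); in $\bar G$ this says every non-edge $uv$ has a common neighbour, i.e.\ $\bar G$ is maximal triangle-free (adding any edge creates a triangle). A union of cycles with at least two components is never maximal triangle-free (join two distinct components without forming a triangle), and a single cycle $C_m$ is maximal triangle-free only for $m\in\{4,5\}$ (for $m\ge 6$ two vertices at distance $3$ can be joined without creating a triangle). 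Hence $\bar G\in\{C_4,C_5\}$, giving $G\cong\overline{C_4}=2K_2$ or $G\cong\overline{C_5}=C_5$.

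I expect the main obstacle to be precisely this $n=t+2$ analysis — converting the edge-deletion criticality condition into the saturation statement about $\bar G$ and then pinning down the $2$-regular maximal triangle-free graphs. I would finish by verifying the converse directions, namely that $2K_2$ (with $t=2$) and $C_5$ (with $t=3$) are $\tau$-critical and satisfy $|V|+|E|=\binom{t+2}{2}$, which is a brief computation.
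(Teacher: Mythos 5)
Your proposal is correct and follows essentially the same route as the paper: sum Hajnal's degree bound to get $|V|+|E|\le n+\frac{n(2t-n+1)}{2}$, observe this quadratic is at most $\binom{t+2}{2}$ with equality only at $n=t+1$ or $n=t+2$ (forcing regularity), identify $K_n$ in the first case, and in the second case pass to the $2$-regular complement, translate criticality into the statement that adding any edge to $\overline{G}$ creates a triangle, and conclude $\overline{G}$ is a single cycle of length $4$ or $5$. The only differences are presentational: you spell out the Gallai-identity translation and the saturation argument in more detail than the paper's terse phrasing.
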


\begin{proof}
By Theorem A, we have
  \begin{equation}
\label{compute}
|V|+|E|\le n+ {n(2t-n+1)\over 2},
\end{equation}
with equality if $G$ is a $(2t-n+1)$-regular graph. To prove (\ref{vemax}), we show that the right hand side of (\ref{compute}) is at most ${t+2\choose 2}$. This %inequality
is equivalent to
$0\le (n-t-1)(n-t-2)$ which is clearly true, since $n\ge t+1$, with equality %is possible
only for $n=t+1$ or $n=t+2$. %Furthermore, equality  in (\ref{compute}) implies that $G$ must be a $(2t-n+1)$-regular graph.
Thus equality in (\ref{vemax}) is possible only for $(n-1)$-regular and for $(n-3)$-regular graphs.

In the first case $G=K_{n}$.
In the second case the candidates are the graphs whose complements are $2$-regular (and have at least four vertices).
Since these graphs are $\tau$-critical with $t=n-2$, the deletion of any edge creates a set of three vertices inducing no edges; equivalently,  including an edge in their complements produces a triangle. This implies that the complement of such a graph $G$ must be a single cycle $C$, since otherwise, deletion of an edge between two cycles creates no triangle. In addition, $C$ has at most five vertices because deletion of a long diagonal would not create a triangle. Thus $C$ is a four-cycle (and then $G=2K_2$), or $C$ (and its complement $G$) is a five cycle. %($C$ cannot be a triangle since $G$ has no isolated vertices)
%completing the proof.
\end{proof}
%%%%%%%%%%%%%%%%%%%%%

\end{document}